\newtheorem{theorem}{Theorem}[section]
\newtheorem{lemma}[theorem]{Lemma}
\theoremstyle{definition}
\theoremstyle{remark}
\newtheorem{remark}[theorem]{Remark}
\def\Xint#1{\mathchoice
	{\XXint\displaystyle\textstyle{#1}}%
	{\XXint\textstyle\scriptstyle{#1}}%
	{\XXint\scriptstyle\scriptscriptstyle{#1}}%
	{\XXint\scriptscriptstyle\scriptscriptstyle{#1}}%
	\!\int}
\def\XXint#1#2#3{{\setbox0=\hbox{$#1{#2#3}{\int}$ }
		\vcenter{\hbox{$#2#3$ }}\kern-.6\wd0}}
\numberwithin{equation}{section}
\begin{document}

\title{Quasi-positive curvature and projectivity}

\author{Yiyang Du}
\address{ School of Mathematical Sciences, Capital Normal University, Beijing 100048, China}
\email{18843409130@163.com}

\author{Yanyan Niu}
\address{School of Mathematical Sciences, Capital Normal University, Beijing 100048, China}
\email{yyniu@cnu.edu.cn}
\thanks{The second author was supported by National Natural Science Foundation of China \#11821101.}

\subjclass[2020]{Primary 53C55; Secondary 32Q15}

\date{October 23, 2024}


\keywords{Compact K\"ahler manifold, Quasi-positive curvature, Projectivity}

\begin{abstract}
		In this paper, we first prove that a compact K\"ahler manifold is projective if it satisfies certain quasi-positive curvature conditions, including quasi-positive  $S_2^\perp,\, S_2^+,\,\mbox{Ric}_3^\perp, \,\mbox{Ric}_3^+$ or $2$-quasi-positive $\mbox{Ric}_k$. Subsequently, we prove that a compact K\"ahler manifold with a restricted holonomy group is both projective and rationally conected if it satisfies some non-negative curvature condition, including non-negative  $S_2^\perp,\, S_2^+,\,\mbox{Ric}_3^\perp, \,\mbox{Ric}_3^+$ or $2$-non-negative $\mbox{Ric}_k$.

\end{abstract}

\maketitle

\section{Introduction}
There are many results on the projectivity and rational connectedness of compact K\"ahler manifolds related to positive or quasi-positive curvature conditions. Here quasi-positive means non-negative everywhere and positive somewhere in the manifold. A projective manifold is called rationally connected if any two points in the manifold can be joined by a rational curve. Let $(M^n, g)$ be a $n$-dimensional compact K\"ahler manifold. For $x\in M$, the holomorphic tangent space is denoted by $T'_xM$. For $X\in T_x'M,\, \mbox{Ric}(X)$ and $H(X)$ denote the Ricci curvature and the holomorphic sectional curvature in the direction of $X$ respectively. The projectivity of a compact K\"ahler manifold is equivalent to the existence of a positive holomorphic line bundle by Kodaira embedding theorem \cite{Kodaira}, which implies that any compact K\"ahler manifold $(M, g)$ with $H^{2,0}_{\bar{\partial}}(M, \mathbb{C})=0$ must be projective. 
Then by using Bochner formula, it is easy to obtain that any compact K\"ahler manifold with positive Ricci curvature ($\mbox{Ric}>0$) is projective. The rational connectedness of 
such manifolds (with $\mbox{Ric}>0$) was established by Campana\cite{Cam} and Koll\'ar-Miyaoka-Mori\cite{KMM}.
In \cite{Yang20}, Yang showed that any compact K\"ahler manifold with quasi-positive Ricci curvature is projective and rationally connected using integration arguments.

	On the holomorphic sectional curvature, Yau\cite{Yau} proposed a well-known conjecture which stated that any K\"ahler manifold with positive holomorphic sectional curvature ($\mbox{H}>0$) is projective and rationally connected. Yau's conjecture was completely confirmed by Yang\cite{YangRC} by considering $\mbox{RC}$-positivity for Hermitian vector bundle and a minimum principle. In fact, Heier-Wong \cite{HW} previously proved that any projective manifold with quasi-positive holomorphic sectional curvature is rationally connected using average arguments and certain integration by parts. Recently, Zhang-Zhang\cite{ZZ} extended Yau's conjecture to the quasi-positive case, with the key point in their proof being an integral inequality.
	
		There has been interest in studying K\"ahler manifolds with some curvature conditions interpolating between the holomorphic sectional curvature and the Ricci curvature.  The $k$-Ricci curvature $\mbox{Ric}_k$ was initiated in the study of $k$-hyperbolicity of a compact K\"ahler manifold by Ni \cite{NiCPAM}. $\mbox{Ric}_k$ is defined as the Ricci curvature of the $k$-dimensional holomorphic subspace of the holomorphic tangent bundle $T'M$. It coincides with the holomorphic sectional curvature when $k=1$, and with the Ricci curvature when $k=n$.  In \cite{Nicrell}, Ni showed that any compact  K\"ahler manifold $(M^n, g)$ with positive $\mbox{Ric}_k$ for some $1\le k\le n$ is projective and rationally connected by applying the maximum principle via viscosity consideration. In the proof of projectivity, Whitney's comass is employed. The proof of rationally connected requires a second variation consideration.  A more generalized curvature, mixed curvature,  was introduced by Chu-Lee-Tam\cite{CLT}. It is defined as a linear combination of $\mbox{Ric}$ and $H$, which is defined as follows:
	$$
	C_{a,b}(x,X)=a\mbox{Ric}(X, \bar{X})+bH(X)
	$$
	for $X\in T_x'M$ with $|X|=1$ and $a, b\in \mathbb{R}$.  If for any $x\in M,\, X\in T_x'M,\, |X|=1$, $C_{a,b}(x, X)\ge 0$ (resp. $>0$), we say $C_{a, b}(x)\ge 0$ (resp. $>0$). If $C_{a, b}(x)\ge 0$, and $C_{a, b}(x_0)>0$ at some point $x_0$, we say $C_{a, b}$ is quasi-positive. It is easy to see that $C_{1, 0}$ is the standard Ricci curvature, $C_{0, 1}$ is the holomorphic sectional curvature, $C_{1, -1}$ is the orthogonal Ricci curvaure $\mbox{Ric}^\perp$ 
	introduced in\cite{NZ-CV} and $C_{1, 1}$ is $\mbox{Ric}^+$ introduced in Ni\cite{Nicrell}. In addition,
	$\mbox{Ric}_k$ is related with $C_{k-1,n-k}$ by Lemma 2.1 in \cite{CLT}. It was shown that any compact K\"ahler manifold with $C_{a, b}>0$ is projective  when $a>0, 3a+2b\ge 0$ and is simply  connected when $a>0$ and $a+b\ge 0$ in \cite{BT,CLT}. Obviously, it includes the results when $\mbox{Ric}^\perp>0, \mbox{Ric}^+>0,$ or $\mbox{Ric}_k>0$  proved in \cite{Nicrell,NZ-CV}.
	Very recently, Tang\cite{Tang} extends the results for positive mixed curvature to the quasi-positive case, and showed that  any compact K\"ahler manifold with quasi-positive $\mathcal{C}_{a,b}$ must be projective when  $a\ge 0, 3a+2b\ge 0$ and rationally connected when $a, b\ge 0$.  
	Chu-Lee-Zhu \cite{CLZ} proved that any compact K\"ahler manifold with quasi-positive $\mathcal{C}_{a,b}$ must be simply connected when $a>0, a+b>0$ by adapting the conformal pertubation method. 
	
		In \cite{Nicrell, NZ-CV,NZ-GT}, the projectivity of a compact K\"ahler manifold was conformed under other positive curvature conditions by considering the $\partial\bar{\partial}$-Bochner formula or $\partial\bar{\partial}$-operator on the comass of holomorphic $(p,0)$-forms and maximum principle. These curvature conditions  include the positivity of $S_2, \, S_2^\perp,\, S_2^+, \mbox{Ric}_k^\perp, $ $\mbox{Ric}_k^+$, and $2$-positivity of $\mbox{Ric}_k$ for some $1\le k\le n$,  even much weaker $BC$-2 positive curvature.  In \cite{Tang}, Tang also extended the result for positive $S_2$ to the quasi-positive case, proving that any compact K\"ahler manifold with quasi-positive $S_2$ must be projective by applying integration arguments and a key identity given in \cite{ZZ}. In this paper, we will generalize the results on positive $S_2^\perp, S_2^+, \mbox{Ric}_3^\perp, \mbox{Ric}_3^+, 2$-positive  $\mbox{Ric}_k$ to the quasi-positive cases by adapting similar arguments as in \cite{Tang}. In summery, our main result is the following:
		
			\begin{theorem}\label{thm1}
			Let $(M^n, g)\,(n\ge 2)$ be a compact K\"ahler manifold with one of the following curvature conditions,
			\begin{enumerate}
				\item quasi-positive $S_2^\perp$ or $S_2^+$;
				\item quasi-positive $\mbox{Ric}_3^\perp$ or $\mbox{Ric}_3^+$ when $n\ge 3$;
				\item  $2$-quasi-positive $\mbox{Ric}_p$ for some $2\le p\le n$.
			\end{enumerate}  
			Then $h^{2,0}=0$. In particular, $M$ is projective. 
		\end{theorem}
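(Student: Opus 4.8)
The plan is to show that $h^{2,0} = \dim_{\mathbb{C}} H^{2,0}_{\bar\partial}(M,\mathbb{C}) = 0$ under each of the listed quasi-positive curvature hypotheses, since by the Kodaira embedding theorem (as noted in the introduction) the vanishing of $h^{2,0}$ forces $M$ to be projective. Following the strategy attributed to Tang and to Zhang--Zhang, I would argue by contradiction: suppose $h^{2,0} \neq 0$ and take a nonzero holomorphic $(2,0)$-form $\sigma$ on $M$. The key is to extract, from such a $\sigma$, a pointwise algebraic inequality on the curvature tensor that is violated by the quasi-positivity assumption.

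First I would set up the Bochner-type integral machinery. A holomorphic $(2,0)$-form $\sigma$ is harmonic (since $M$ is compact Kähler), so applying the $\partial\bar\partial$-Bochner formula to $|\sigma|^2$ and integrating over $M$ yields an integral identity in which the curvature enters through a specific contraction of the form $\int_M \langle \mathcal{R}\, \sigma, \sigma\rangle$ against the squared norm of $\sigma$. The plan is to choose at each point $x$ a unitary frame adapted to the structure of $\sigma_x$ — writing $\sigma$ in a canonical (Darboux/normal) form as a sum of decomposable pieces $\theta^{2i-1}\wedge\theta^{2i}$ with coefficients — so that the curvature contraction splits into a sum of $2$-plane curvature terms. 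This is exactly the point where the curvatures $S_2^\perp, S_2^+$ and the $2$-Ricci-type quantities $\mbox{Ric}_3^\perp, \mbox{Ric}_3^+, \mbox{Ric}_p$ appear: each of these is built precisely to control the relevant sum of sectional or bisectional curvatures over the $2$-dimensional (or $k$-dimensional) subspaces spanned by the frame vectors supporting $\sigma$.

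Next, I would establish the central pointwise inequality: for each of the five curvature conditions, the Bochner curvature term $\langle \mathcal{R}\,\sigma,\sigma\rangle$ at a point $x$ is bounded below by (a positive multiple of) the relevant quasi-positive curvature quantity evaluated on the support of $\sigma_x$, times $|\sigma_x|^2$. For the $2$-quasi-positive $\mbox{Ric}_p$ case I would invoke the relation between $\mbox{Ric}_p$ and the mixed curvature $C_{p-1,\,n-p}$ recorded via Lemma~2.1 of \cite{CLT}, and the meaning of ``$2$-quasi-positive'' (the sum of the two smallest eigenvalues of the associated Hermitian form being $\ge 0$ everywhere and $>0$ somewhere), so that the two-plane structure of a $(2,0)$-form matches the two-eigenvalue structure of the $2$-positivity hypothesis. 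For $S_2^\perp, S_2^+$ and $\mbox{Ric}_3^\perp, \mbox{Ric}_3^+$ I expect a similar but more involved frame computation, using the key algebraic identity from \cite{ZZ} that rewrites the Bochner integrand so the orthogonal/plus ($\perp$ / $+$) corrections are manifestly controlled.

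The contradiction then follows by integration: since each curvature quantity is $\ge 0$ everywhere, the integrand $\langle \mathcal{R}\,\sigma,\sigma\rangle$ is $\ge 0$ pointwise, while the Bochner identity forces its integral to equal zero (or a non-positive quantity); hence the integrand vanishes identically, forcing $\sigma$ to vanish on the open set where the curvature is strictly positive. A holomorphic form vanishing on a nonempty open set vanishes identically by the identity theorem, so $\sigma \equiv 0$, contradicting $\sigma \neq 0$; therefore $h^{2,0}=0$. The main obstacle I anticipate is the second step: producing the correct adapted frame and verifying that the Bochner curvature term dominates the specific combination defining $S_2^\perp, S_2^+, \mbox{Ric}_3^\perp, \mbox{Ric}_3^+$ rather than some other contraction. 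The subtlety is that a general $(2,0)$-form is not decomposable, so one must diagonalize $\sigma$ (via its canonical skew-symmetric form) and carefully track the cross terms between distinct decomposable blocks; ensuring these cross terms do not spoil the lower bound, and that ``quasi-positive'' (positivity only on an open set) still yields vanishing of the holomorphic form, is where the argument of \cite{Tang} and the identity of \cite{ZZ} do the essential work.
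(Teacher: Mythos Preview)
Your overall architecture---Bochner formula, integration, quasi-positivity contradiction---is the paper's, but you are missing the one maneuver that dissolves exactly the obstacle you flag at the end. You propose to work directly with the holomorphic $(2,0)$-form $\sigma$, put it in Darboux form $\sigma=\sum_{i}\mu_i\,e^{2i-1}\wedge e^{2i}$, and then worry about cross terms between the blocks. The paper never faces this: given a nonzero holomorphic $(2,0)$-form $\xi$, it takes $m$ maximal with $\xi^m\neq 0$ and sets $\eta=\xi^m$. Because $\xi^{m+1}=0$, at every point where $\eta\neq 0$ one has $\eta=c\,e^1\wedge\cdots\wedge e^{2m}$, a \emph{decomposable} $(2m,0)$-form. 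The Bochner curvature term is then exactly $|\eta|^2\sum_{i=1}^{2m}R_{i\bar i v\bar v}$, with no cross terms whatsoever. Without this power trick your ``central pointwise inequality'' is not accessible: for a non-decomposable $(2,0)$-form the curvature contraction does not reduce to a clean $S_2$-type expression on a single $2$-plane.

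You also understate what the Zhang--Zhang identity does. It is not a rewriting of one integrand; it equates two genuinely different integrals,
\[
\int_M \Delta_g|\eta|^2\cdot|\eta|^2\,\omega^n \;=\; 2m\int_M \langle\sqrt{-1}\partial\bar\partial|\eta|^2,\beta\rangle_g\,\omega^n \;=\; -\tfrac12\int_M \bigl|d|\eta|^2\bigr|^2\,\omega^n,
\]
whose curvature contents are different: tracing against $\omega$ yields $\sum_{i=1}^{2m}R_{i\bar i}$, while tracing against $\beta$ yields $\sum_{i,j=1}^{2m}R_{i\bar i j\bar j}$. Neither alone is $S_{2m}^\perp$, $S_{2m}^+$, or the term controlled by $2$-positive $\mbox{Ric}_p$; the paper picks, case by case, a specific linear combination of the two (with coefficients chosen so the gradient terms stay non-negative) that \emph{produces} the desired curvature on the right-hand side. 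Your proposal does not indicate how the $\perp/+$ or $2$-positivity structure is to be extracted, and invoking Lemma~2.1 of \cite{CLT} for case~(3) does not do it---the paper instead sums $\mbox{Ric}_p(\Sigma)(e^1,\bar e^1)+\mbox{Ric}_p(\Sigma)(e^2,\bar e^2)$ over all $p$-planes $\Sigma\supset\{e^1,e^2\}$ and matches the combinatorics to coefficients $C_{n-3}^{p-3}$ and $C_{n-3}^{p-2}$. Once these two ingredients are in place, the contradiction is that the integral inequality forces $|\eta|\equiv C$, and then $0\ge \tfrac{C^4}{2m}\int_M S_{2m}^\perp>0$ unless $C=0$; your identity-theorem endgame would also work, but only after the above is fixed.
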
 
		Theorem \ref{thm1} partially answers the question asked by Ni in \cite{Nihol}, which said that whether or not the projectivity result under the $BC$-2 positivity in \cite{Nicrell} still holds when the positivity is replaced by quasi-positivity. 
		
			For the non-negative curvature conditions, additional requirements are necessary to guarantee that the K\"ahler manifold is projective. Chu-Lee-Zhu\cite{CLZ} showed that a compact K\"ahler manifold with non-negative mixed curvature $C_{a, b}\ge 0$ where $a>0, a+b>0$, and holonomy group $U(n)$ is projective. Ni\cite{Nihol} extended this result to the case where $a>0, 3a+2b>0$. 
		Ni also proved that a compact K\"ahler manifold with $S_2\ge 0$ (or with the smallest two eigenvalues of $\mbox{Ric}$ being non-negative) and  holonomy gourp $U(n)$ is not only projective but also rationally connected. The second main result of this paper  extends the results in \cite{Nihol} to the curvature conditions considered in Theorem \ref{thm1}.
		\begin{theorem}\label{thm2}
			Let $(M^n, g)\, (n\ge 2)$ be a compact K\"ahler manifold with holonomy group $U(n)$. Assume one the the following holds:
			\begin{enumerate}[(a)]
				\item $S_2^\perp\ge 0$ or $S_2^+\ge 0$;
				\item $\mbox{Ric}_3^\perp\ge 0$ or $\mbox{Ric}_3^+\ge 0$ when $n\ge 3$;
				\item  $2$-non-negative $\mbox{Ric}_p$ for some $2\le p\le n$.
			\end{enumerate}
			Then $h^{2,0}=0$. In particular, $M$ is projective and rationally connected.  
		\end{theorem}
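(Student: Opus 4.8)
The plan is to prove the two conclusions separately, letting the full holonomy hypothesis play the role that strict positivity played in Theorem~\ref{thm1}. For projectivity I would begin from the integral Bochner identity that drives the proof of Theorem~\ref{thm1}: for every holomorphic $(2,0)$-form $\alpha$,
\begin{equation*}
\int_M |\nabla\alpha|^2\,dV_g \;+\; \int_M Q(\alpha)\,dV_g \;=\; 0,
\end{equation*}
where the integrand $Q(\alpha)$ is exactly the curvature expression that is pointwise nonnegative under whichever of the hypotheses (a)--(c) is assumed. In the quasi-positive setting this forced $\alpha\equiv0$; under mere nonnegativity it instead forces $\nabla\alpha\equiv0$, so every holomorphic $2$-form is parallel. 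A parallel form is fixed by the restricted holonomy, which here is the standard action of $U(n)$ on $\Lambda^2(\mathbb{C}^n)^*$; its central circle acts by the nontrivial character $\lambda\mapsto\lambda^{-2}$, so there is no nonzero invariant when $n\ge2$. Hence $\alpha\equiv0$, giving $h^{2,0}=0$ and projectivity by Kodaira. This step is routine once the identity behind Theorem~\ref{thm1} is available.

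For rational connectedness I would argue by a dichotomy supplied by the holonomy. Either the assumed curvature quantity ($S_2^\perp$, $S_2^+$, etc.) is quasi-positive, i.e. strictly positive at some point, or it vanishes identically on $M$. In the first alternative one is back in the quasi-positive regime: for the $\mbox{Ric}_p$ and the ``$+$'' hypotheses, which correspond through the relations recalled in the introduction to mixed curvatures $C_{a,b}$ with $a,b\ge0$, rational connectedness follows from Tang's theorem \cite{Tang}; the remaining ``$\perp$'' cases I would treat through the characterization that a projective manifold is rationally connected precisely when $H^0\big(M,(\Omega^1_M)^{\otimes m}\big)=0$ for all $m\ge1$, producing a nonzero holomorphic tensor differential from the non-uniruled base of the maximal rationally connected fibration and contradicting it using the curvature. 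In the second alternative the curvature vanishes identically, so $Q\equiv0$, and the task is to show that this degeneracy cannot coexist with full holonomy.

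The genuine obstacle is exactly this degenerate alternative. The difficulty is that hypotheses such as $2$-non-negative $\mbox{Ric}$ or $S_2^\perp\ge0$ are \emph{direction-dependent} and do \emph{not} entail $\mbox{Ric}\ge0$, so no nonnegative-Ricci structure theorem is available off the shelf. My approach would be to analyze the equality case of the pointwise inequality $Q\ge0$: its vanishing should single out a distinguished subspace of $T'_xM$ along which the curvature degenerates, and I would aim to prove this subspace is parallel, yielding a de~Rham--Berger reduction of the holonomy below $U(n)$ and the desired contradiction. Executing this equality analysis uniformly across all five curvature conditions---and, in the quasi-positive alternative, supplying the rational-connectedness argument for the ``$\perp$'' conditions, which fall outside the $a,b\ge0$ range of the known theorems---is where the real work lies.
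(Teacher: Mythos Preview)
Your projectivity sketch has a real gap. The identities behind Theorem~\ref{thm1} are not of the clean form $\int_M|\nabla\alpha|^2+\int_M Q(\alpha)=0$ applied to the $(2,0)$-form itself; one first passes to the decomposable $(2m,0)$-form $\eta=\xi^m$ (with $m$ maximal so that $\xi^{m+1}\equiv0$), and the integral identity obtained from \eqref{eq:partial} and \eqref{eq:key} only carries the \emph{tangential} gradient $\sum_{i=1}^{2m}|\nabla_i\eta|^2$, not the full $|\nabla\eta|^2$. Under mere nonnegativity this yields $|\eta|\equiv C$, $\nabla_i\eta=0$ for $1\le i\le 2m$, and $S_{2m}(x,\Sigma)=0$, but the transverse derivatives $\nabla_l\eta$ for $l>2m$ remain uncontrolled. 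In cases (b) and (c) with $m\ge2$ the paper closes this by a second-variation step: since $S_{2m}(x,\Sigma)=0$ attains the minimum over all $2m$-planes, Proposition~4.2 of \cite{Nicrell} gives $\sum_{i=1}^{2m}R_{i\bar i l\bar l}\ge0$ for each $l>2m$; feeding $v=e_l$ into the pointwise formula \eqref{eq:bochner} (with $|\eta|$ constant) then forces $\nabla_l\eta=0$. Only after this does one have a parallel form to which the $U(n)$-holonomy argument applies. Calling the step ``routine'' skips precisely the non-obvious ingredient.

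On rational connectedness, note first that the paper itself does not write out a self-contained argument: it defers to the scheme of \cite{Nihol} and, once $\eta$ is shown parallel and hence zero, simply declares that Theorem~\ref{thm2} follows. Your dichotomy plan (quasi-positive versus identically vanishing curvature, the latter forcing a holonomy reduction) is a plausible strategy but, as you acknowledge, the equality analysis producing a parallel distribution is genuinely delicate, and in the quasi-positive branch the $\perp$ curvatures lie outside the $a,b\ge0$ range where \cite{Tang} applies. So this part is not wrong so much as programmatic, and it departs from the paper's route, which leans on \cite{Nihol} rather than on an ad hoc rigidity dichotomy.
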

		Here, $2$-non-negative $\mbox{Ric}_p$ is equivalent to that $S_2\ge 0$ when $p=2$ and that the smallest two eigenvalues of $\mbox{Ric}$ is non-negative when $p=n$. 
		
			\section{preliminaries}
		
		In this section, we will present the curvature conditions introduced in \cite{NiCPAM,Nicrell,NZ-CV,NZ-GT} and useful facts that are necessary for our results. 
		\subsection{ Curvature conditions} 
		
		Let $(M^n, g)$ be a K\"ahler manifold of complex dimension $n\ge 2$. Let $\{z^i\}_{i=1}^n$ be the local holomorphic coordinates near $x$ on $M$. At $x$, the curvature tensor components are 
		$$
		R_{i\bar{j}k\bar{l}}
		=-\frac{\partial^2 g_{i\bar{j}}}{\partial z^k\partial\bar{z}^l}+g^{p\bar{q}}\frac{\partial g_{i\bar{q}}}{\partial z^k}\frac{\partial g_{p\bar{j}}}{\partial \bar{z}^l}.
		$$
		
		The Ricci curvature  is  $$\mbox{Ric}(X, \bar{X})=\sum_{i,j}g^{i\bar{j}}R(X, \bar{X}, \frac{\partial}{\partial z^i},\frac{\partial}{\partial\bar{z}^j}),$$
		for any $X\in T_x'M$. 
		
			The holomorphic sectional curvature $H(X)$ is 
		$$
		H(X)=\frac{R(X,\bar{X},X,\bar{X})}{|X|^4}
		$$
		for any nonzero $X\in T_x'M$. 
		
		In \cite{NZ-CV}, the orthogonal Ricci curvature $\mbox{Ric}^\perp$ is defined as the difference of the Ricci curvature and the holomorphic sectional curvature, that is,
		$$
		\mbox{Ric}^\perp(X, \bar{X}) = \mbox{Ric}(X, \bar{X})-|X|^2 H(X),
		$$
		for any nonzero $X\in T_x'M$. 
		
		In \cite{Nicrell}, $\mbox{Ric}^+$ is defined as the sum of the Ricci curvature and the holomorphic sectional curvature, that is,
		$$
		\mbox{Ric}^+(X, \bar{X}) = \mbox{Ric}(X, \bar{X})+|X|^2 H(X),
		$$
		for any nonzero $X\in T_x'M$. 
		
			In \cite{NiCPAM}, they introduced $k$-Ricci curvature $\mbox{Ric}_k$ which interpolates between holomorphic sectional curvature and Ricci curvature. 
		Let $\Sigma$ be a $k$-subspace in $T_x'M$, $\mbox{Ric}_k(x, \Sigma)$ is the Ricci curvature of $\Sigma$. For any $v\in \Sigma$,  
		$$
		\mbox{Ric}_k(x, \Sigma)(v,\bar{v})=\sum_{j=1}^k R(v,\bar{v}, E_j,  \bar{E}_j),
		$$
		where $\{E_i\}_{i=1}^n$ is any unitary orthogonal frame of $\Sigma$.
		We say $\mbox{Ric}_k(x)>0\, (\ge 0)$ if and only if $\mbox{Ric}_{k,-}(x, \Sigma)>0 \,(\ge 0)$ for any $k$-subspace $\Sigma\subset T_x'M$, where $\mbox{Ric}_{k,-}(x, \Sigma)=\inf_{v\in \Sigma, |v|=1}\limits \mbox{Ric}_k(x, \Sigma)(v, \bar{v})$. If the sum of the lowest two eigenvalues of $\mbox{Ric}_k(x, \Sigma)$ is positive (resp.\,non-negative), we say $\mbox{Ric}_k(x,\Sigma)$ is $2$-positive (resp.\,$2$-non-negative). Obviously, $\mbox{Ric}_1$ corresponds to the holomorphic sectional curvature. When $k\ge 2$, we say $\mbox{Ric}_k(x)$ is $2$-positive if for any $x$ and any $k$-subspace $\Sigma\subset T_x'M$, $\mbox{Ric}_k(x, \Sigma)$ is $2$-positive.  If $\mbox{Ric}_k(x)$ is $2$-non-negative and at least there is some point $x_0\in M$ such that $\mbox{Ric}_k(x_0)$ is $2$-positive, we say $\mbox{Ric}_k(x)$ is 2-quasi-positive.  
		
			In \cite{NZ-GT}, the $k$-scalar curvature $S_k(x, \Sigma)$ is defined as
		$$
		S_k(x, \Sigma)=k\Xint{-}_{z\in \Sigma,\, |z|=1}\mbox{Ric}_k(Z, \bar{Z})d\theta(Z),
		$$
		where $\Xint{-} f(Z)d\theta(Z)=\frac{1}{\mbox{Vol}(\mathbb{S}^{2k-1})}\int_{\mathbb{S}^{2k-1}}f(Z)d\theta(Z)$ and $\mathbb{S}^{2k-1}$ is the unit sphere in $\Sigma$. It follows from the Berger's average trick that $S_k(x, \Sigma)$ is the trace of $\mbox{Ric}_k(x, \Sigma)$, that is, if $E_1,\cdots, E_k$ are unitary orthogonal frame of $\Sigma$, then 
		$$
		S_k(x, \Sigma)=\sum_{j=1}^k \mbox{Ric}_k(x, \Sigma)(E_j, \bar{E}_j).
		$$
		Let $S_k(x)=\inf_\Sigma\limits S_k(x, \Sigma)$. 
		We say $S_k(x)>0 \,(\ge 0)$ if and only if for any $k$-subspace $\Sigma\subset T_x'M, S_k(x,\Sigma)>0 \,(\ge 0)$. $S_k(x)$ is quasi-positive if $S_k(x)\ge 0$ and $S_k(x_0)>0$ at some point $x_0\in M$. It is easy to see that the positivity of $S_k$ implies the positivity of $S_l$ when $l\ge k$ and if $\mbox{Ric}_k$ is $2$-quasi-positive for some $2\le k\le n$, then $S_3$ is quasi-positive and $S_l$ is quasi-positive for any $3\le l\le n$. 
		
		In \cite{Nicrell,NZ-CV}, they also introduced the curvature $S_k^\perp (S_k^+)$. For any $k$-dimensional subspace $\Sigma\subset T_x'M$,  
		$$
		S_k^\perp(x, \Sigma)=k\Xint{-}_{z\in \Sigma, \, |z|=1}\mbox{Ric}^\perp(Z, \bar{Z})d\theta(Z),
		$$
		$$
		S_k^+(x, \Sigma)=k\Xint{-}_{z\in \Sigma, \, |z|=1}\mbox{Ric}^+(Z, \bar{Z})d\theta(Z).
		$$
		They are related to $S_k(x, \Sigma)$ and Ricci curvature by the following identities
		\begin{equation}\label{eq:Sperp}
			S_k^\perp(x, \Sigma)=\sum_{j=1}^k\mbox{Ric}(E_j, \bar{E}_j)-\frac{2}{k+1}S_k(x,\Sigma),
		\end{equation}
		\begin{equation}\label{eq:Ssum}
			S_k^+(x, \Sigma)=\sum_{j=1}^k\mbox{Ric}(E_j, \bar{E}_j)+\frac{2}{k+1}S_k(x,\Sigma),
		\end{equation}
		where $E_1,\cdots, E_k$ are unitary orthogonal frame of $\Sigma$.
		Let $S_{k}^\perp(x)=\inf_\Sigma\limits S_k^\perp(x, \Sigma)$ (resp. $S_{k}^+(x)=\inf_\Sigma\limits S_k^+(x, \Sigma)$ ). 	Hence, $S_k^\perp(x)$ interpolates between $\mbox{Ric}^\perp(X, \bar{X})$ and $\frac{n-1}{n+1}S(x)$,  $S_k^+(x)$ interpolates between $\mbox{Ric}^+(X, \bar{X})$ and $\frac{n+3}{n+1}S(x)$.  We say $S_k^\perp(x)>0 (\ge 0)$ if and only if for any $k$-subspace $\Sigma\subset T_x'M,\, S_k^\perp(x,\Sigma)>0 \,(\ge 0)$. $S_k^\perp(x)$ is quasi-positive if $S_k^\perp\ge 0$ and $S_k^\perp(x_0)>0$ at some point $x_0\in M$. The same definitions of positivity, non-negativity, and quasi-positivity apply to $S_k^+$. It is also easy to see that if $S_2^\perp \, (\mbox{resp}.\, S_2^+)$ is quasi-positive,  $S_l^\perp \, (\mbox{resp}.\, S_l^+)$ is quasi-positive when $l\ge 2$.
		
			In \cite{Nicrell}, Ni also considered the following curvatures
		\begin{align*}
			\mbox{Ric}^\perp_k(x, \Sigma)(v, \bar{v})&=\mbox{Ric}_k(x, \Sigma)(v,\bar{v})-\frac{R(v, \bar{v}, v,\bar{v})}{|v|^2},\\
			\mbox{Ric}^+_k(x, \Sigma)(v, \bar{v})&=\mbox{Ric}_k(x, \Sigma)(v,\bar{v})+\frac{R(v, \bar{v}, v,\bar{v})}{|v|^2}
		\end{align*}
		for any nonzero $v\in \Sigma$. We say $\mbox{Ric}^\perp_k(x)>(\ge)\, 0$  if and only if for any $\Sigma\subset T_x'M, \, v\in \Sigma\setminus\{0\},\, \mbox{Ric}^\perp_k(x, \Sigma)(v, \bar{v})>(\ge) \,0$.  $\mbox{Ric}^\perp_k(x)$ is quasi-positive if $\mbox{Ric}^\perp_k(x)\ge 0$ and  $\mbox{Ric}^\perp_k(x_0)> 0$ at some point $x_0\in M$. Similarly, one can also define the positivity, non-negativity, quasi-positivity of $\mbox{Ric}_k^+$. If $\mbox{Ric}_k^\perp(x) \,(\mbox{or} \, \mbox{Ric}_k^+(x))$ is quasi-positive, then $S_{k}$ is quasi-positive as shown by the following equalities from
		\cite{Nicrell},
		\begin{align*}
			\Xint{-}_{\mathbb{S}^{2k-1}\subset\Sigma\subset T_x'M} \mbox{Ric}^\perp_{k}(Z,\bar{Z})d\theta(Z)&=\frac{k-1}{k(k+1)}S_k(x,\Sigma),\\
			\Xint{-}_{\mathbb{S}^{2k-1}\subset\Sigma\subset T_x'M} \mbox{Ric}^+_{k}(Z,\bar{Z})d\theta(Z)&=\frac{k+3}{k(k+1)}S_k(x,\Sigma).
		\end{align*}
		In particular, if $\mbox{Ric}_3^\perp \,(\mbox{or} \, \mbox{Ric}_3^+(x))$ is quasi-positive, then $S_l(x)$ is quasi-positive when $l\ge 3$. 

	\subsection{Fundamental identity}
We will recall the key identity for forms in \cite{Tang,ZZ}. The identity is also fundamental in our proof. 
Let $\omega$ be the K\"ahler form of the K\"ahler manifold $(M^n, g)$. Let $\alpha$ be a real $(1, 1)$-form and $\eta$ be a $(p, 0)$-form on $M$. $\beta$ is a real $(1, 1)$-form defined as
$$ 
\beta=\Lambda^{p-1}\left((\sqrt{-1})^{p^2}\frac{\eta\wedge \bar{\eta}}{p!}\right),
$$
where $\Lambda=\star^{-1}L\star$ is adjoint operator of $L$ which is defined as $L\phi=\omega\wedge \phi$ for any $(p, q)$-form $\phi$, $\star$ is the Hodge star operator.  
\begin{lemma}\label{lm:hodge}
	With the assumptions above, we have
	\begin{enumerate}
		\item (\cite{Demailly}) For every $u\in \Lambda^k(T^{\mathbb{R}}M\otimes_{\mathbb{R}} \mathbb{C})$, 
		$$[L^r, \Lambda]u=r(k-n+r-1)L^{r-1}u.$$
		\item $\star \eta=(-\sqrt{-1})^{p^2}\eta\wedge \frac{\omega^{n-p}}{(n-p)!}$.
	\end{enumerate}
	
\end{lemma}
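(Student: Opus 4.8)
These two statements are logically independent. Part (1) is the standard $\mathrm{sl}_2$-type commutation relation on a K\"ahler manifold: the plan is to take as input the primitive Lefschetz identity $[L,\Lambda]=(k-n)\,\mathrm{Id}$ on $k$-forms (the $r=1$ case, recorded in Demailly) and then to induct on $r$, the mechanism being the derivation property of the commutator,
\[
[L^{r+1},\Lambda]=L\,[L^r,\Lambda]+[L,\Lambda]\,L^r .
\]
Applying this to a $k$-form $u$ and using that $L^r u$ is a $(k+2r)$-form, the last term contributes $(k+2r-n)L^r u$, while the inductive hypothesis turns the first term into $r(k-n+r-1)L^r u$; adding the two coefficients gives exactly $(r+1)\big(k-n+(r+1)-1\big)$, which is the claimed coefficient at level $r+1$. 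This step should be entirely routine.

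For (2) I would argue at a fixed point $x$, since $\star$, $\omega$ and $\wedge$ are all algebraic there. Choose a unitary coframe $\{\phi^i\}$ with $\omega=\sqrt{-1}\sum_i\phi^i\wedge\bar\phi^i$, so that $dV=\omega^n/n!=(\sqrt{-1})^n\,\phi^1\wedge\bar\phi^1\wedge\cdots\wedge\phi^n\wedge\bar\phi^n$; by $\mathbb{C}$-linearity and unitary invariance it suffices to treat the decomposable form $\eta=\phi^1\wedge\cdots\wedge\phi^p$. Expanding $\omega^{n-p}$ multinomially and wedging with $\eta$, only the complementary term survives, so
\[
\eta\wedge\frac{\omega^{n-p}}{(n-p)!}=(\sqrt{-1})^{n-p}\,\phi^1\wedge\cdots\wedge\phi^p\wedge\phi^{p+1}\wedge\bar\phi^{p+1}\wedge\cdots\wedge\phi^n\wedge\bar\phi^n .
\]

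To identify this with $\star\eta$ I would verify the defining relation $\gamma\wedge\star\eta=g(\gamma,\eta)\,dV$, where $g$ is the $\mathbb{C}$-bilinear extension of the metric, so that $g(\phi^i,\phi^j)=0$ and $g(\phi^i,\bar\phi^j)=\delta_{ij}$, testing against a basis of monomial forms $\gamma$. Both sides vanish unless $\gamma=\bar\eta=\bar\phi^1\wedge\cdots\wedge\bar\phi^p$, so everything reduces to one coefficient comparison: reordering $\bar\phi^1\wedge\cdots\wedge\bar\phi^p\wedge\phi^1\wedge\cdots\wedge\phi^p$ into the interleaved block $\phi^1\wedge\bar\phi^1\wedge\cdots\wedge\phi^p\wedge\bar\phi^p$ produces the sign $(-1)^{p^2}(-1)^{p(p-1)/2}$, and combining this with the factor $(\sqrt{-1})^{n-p}$ above, the normalization $(-\sqrt{-1})^{p^2}$, and the relation $dV=(\sqrt{-1})^n\,\phi^1\wedge\bar\phi^1\wedge\cdots\wedge\phi^n\wedge\bar\phi^n$, all the powers of $\sqrt{-1}$ and all the signs collapse so that $\bar\eta\wedge\big[(-\sqrt{-1})^{p^2}\eta\wedge\frac{\omega^{n-p}}{(n-p)!}\big]=dV=g(\bar\eta,\eta)\,dV$, which is exactly what is required.

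The main obstacle is precisely this phase and sign bookkeeping in (2): one must track the permutation sign from interleaving the $\phi^i$ and $\bar\phi^i$ factors simultaneously with the several powers of $\sqrt{-1}$ coming from $\omega$, from the volume form, and from the normalizing constant, and reconcile them with whatever convention for the $\mathbb{C}$-linear extension of the Hodge star is adopted. By contrast, (1) is a clean one-line induction.
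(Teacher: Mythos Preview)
The paper does not actually give a proof of this lemma: part~(1) is simply quoted from Demailly, and part~(2) is stated as a known formula with no argument supplied. So there is no ``paper's proof'' to compare against here; the lemma is being used as input to the proof of Lemma~\ref{lm:identity} rather than being established in the paper itself.

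Your proposal is correct and supplies exactly the standard arguments one would expect. For~(1), the induction via the derivation identity $[L^{r+1},\Lambda]=L[L^r,\Lambda]+[L,\Lambda]L^r$ is the usual route, and the coefficient arithmetic checks: $r(k-n+r-1)+(k+2r-n)=(r+1)(k-n+r)$. For~(2), reducing to a decomposable $\eta=\phi^1\wedge\cdots\wedge\phi^p$ at a point and testing the defining relation of $\star$ against $\bar\eta$ is the right plan; I verified your sign claim, and indeed
\[
(-\sqrt{-1})^{p^2}\,(\sqrt{-1})^{n-p}\,(-1)^{p^2}(-1)^{p(p-1)/2}=(\sqrt{-1})^{p(p-1)}(-1)^{p(p-1)/2}(\sqrt{-1})^{n}=(\sqrt{-1})^{n},
\]
since $(\sqrt{-1})^{p(p-1)}=(-1)^{p(p-1)/2}$. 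So the phases do collapse to give $\bar\eta\wedge\big[(-\sqrt{-1})^{p^2}\eta\wedge\tfrac{\omega^{n-p}}{(n-p)!}\big]=dV=g(\bar\eta,\eta)\,dV$, as you asserted. The only caveat is the one you already flagged: the answer is convention-dependent, and you are implicitly using the $\mathbb{C}$-bilinear Hodge star (so $\gamma\wedge\star\eta=g(\gamma,\eta)\,dV$ with $g$ bilinear), which is consistent with how the paper uses $\star$ in the proof of Lemma~\ref{lm:identity}.
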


The key identity is the following 
\begin{lemma}[\cite{ZZ}]\label{lm:identity} With the assumptions above, 
	\begin{equation}\label{eq:identity}
		(\sqrt{-1})^{p^2}\alpha\wedge \eta\wedge \bar{\eta}\wedge\frac{\omega^{n-p-1}}{(n-p-1)!}=[\mbox{tr}_\omega\alpha\cdot |\eta|^2_g-p<\alpha, \beta>_g]\frac{\omega^n}{n!}.
	\end{equation}
	
\end{lemma}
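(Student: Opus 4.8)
The plan is to treat \eqref{eq:identity} as a pointwise algebraic identity between $(n,n)$-forms: both sides are smooth top-degree forms depending $\mathbb{R}$-linearly on the real $(1,1)$-form $\alpha$ and sesquilinearly on the coefficients of $\eta$, so it suffices to verify the equality fiberwise at each $x\in M$. First I would fix $x$ and choose local holomorphic coordinates giving a unitary coframe there, so that $g_{i\bar j}=\delta_{ij}$ and $\omega=\sqrt{-1}\sum_i dz^i\wedge d\bar z^i$; since $\alpha$ is a real $(1,1)$-form I would additionally rotate the frame so as to diagonalize it simultaneously with $g$, writing $\alpha=\sqrt{-1}\sum_i a_i\,dz^i\wedge d\bar z^i$ with $a_i\in\mathbb{R}$, and expand $\eta=\sum_{|I|=p}\eta_I\,dz^I$ over increasing multi-indices. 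With this normalization $\mbox{tr}_\omega\alpha=\sum_i a_i$ and $|\eta|^2_g=\sum_I|\eta_I|^2$.

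Writing $e^k:=\sqrt{-1}\,dz^k\wedge d\bar z^k$, I would record the elementary facts that drive the computation: a sign count gives $(\sqrt{-1})^{p^2}dz^I\wedge d\bar z^I=\prod_{k\in I}e^k$, a positive real $(p,p)$-form, and the expansion of $\omega$ gives $\frac{\omega^m}{m!}=\sum_{|K|=m}\prod_{k\in K}e^k$, so in particular $\frac{\omega^n}{n!}=e^1\wedge\cdots\wedge e^n$. The forms $e^k$ satisfy $\langle e^i,e^j\rangle_g=\delta_{ij}$ together with the contraction rule $\Lambda\bigl(\prod_{k\in I}e^k\bigr)=\sum_{k\in I}\prod_{l\in I\setminus\{k\}}e^l$; iterating the latter $p-1$ times yields $\Lambda^{p-1}\bigl(\prod_{k\in I}e^k\bigr)=(p-1)!\sum_{k\in I}e^k$, which is where the combinatorial factor that eventually matches the $p$ in \eqref{eq:identity} is born.

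For the left-hand side I would observe that, because both $\alpha$ and $\frac{\omega^{n-p-1}}{(n-p-1)!}$ are built only from diagonal factors $e^k$, a monomial $\eta_I\bar\eta_J(\sqrt{-1})^{p^2}dz^I\wedge d\bar z^J$ can contribute to the top-degree product only when $I=J$: any index in $I\setminus J$ forces a repeated $dz^k$ and kills the wedge. Hence only diagonal terms survive, and using the bookkeeping identity $\alpha\wedge\prod_{k\in I}e^k\wedge\frac{\omega^{n-p-1}}{(n-p-1)!}=\bigl(\sum_{i\notin I}a_i\bigr)\frac{\omega^n}{n!}$ I obtain that the left-hand side equals $\sum_I|\eta_I|^2\bigl(\sum_{i\notin I}a_i\bigr)\frac{\omega^n}{n!}=\bigl[\mbox{tr}_\omega\alpha\cdot|\eta|^2_g-\sum_I|\eta_I|^2\sum_{i\in I}a_i\bigr]\frac{\omega^n}{n!}$.

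It then remains only to identify the correction term, i.e. to show $p\langle\alpha,\beta\rangle_g=\sum_I|\eta_I|^2\sum_{i\in I}a_i$. The same diagonality argument shows that the off-diagonal ($I\neq J$) part of $\beta$ is a combination of forms $dz^a\wedge d\bar z^b$ with $a\neq b$, since $\Lambda^{p-1}$ cannot merge distinct indices, and such forms pair to zero against the diagonal $\alpha$; applying the iterated contraction rule to the diagonal part gives $\beta=\frac1p\sum_I|\eta_I|^2\sum_{k\in I}e^k$ up to terms orthogonal to $\alpha$, whence $p\langle\alpha,\beta\rangle_g=\sum_k a_k\sum_{I\ni k}|\eta_I|^2=\sum_I|\eta_I|^2\sum_{i\in I}a_i$, exactly what is needed. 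The main obstacle is bookkeeping rather than conceptual: one must pin down the sign in $(\sqrt{-1})^{p^2}dz^I\wedge d\bar z^I=\prod_{k\in I}e^k$ and fix normalizations making $\langle e^i,e^j\rangle_g=\delta_{ij}$ and $\Lambda e^k=1$ mutually consistent, so that the factor produced by $\Lambda^{p-1}$ is precisely $(p-1)!$ and the $\tfrac{1}{p!}$ in the definition of $\beta$ converts it into the coefficient $\tfrac1p$; verifying rigorously that only diagonal multi-indices contribute on both sides is the other point that requires care.
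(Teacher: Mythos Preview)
Your argument is correct. Both sides are computed pointwise in a unitary frame diagonalizing $\alpha$, the diagonality forces $I=J$ in the wedge product on the left, and your iterated contraction $\Lambda^{p-1}\bigl(\prod_{k\in I}e^k\bigr)=(p-1)!\sum_{k\in I}e^k$ correctly produces the coefficient $\tfrac1p$ in the diagonal part of $\beta$; the off-diagonal terms of $\beta$ are indeed of type $dz^a\wedge d\bar z^b$ with $a\neq b$ and pair to zero with the diagonal $\alpha$.

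The paper's proof takes a different route. Rather than expanding the left-hand side directly, it invokes the Hodge star identity $\star\eta=(-\sqrt{-1})^{p^2}\eta\wedge\frac{\omega^{n-p}}{(n-p)!}$ together with the Lefschetz commutator $[L^r,\Lambda]u=r(k-n+r-1)L^{r-1}u$ (Lemma~\ref{lm:hodge}) to recognize the left-hand side as $\langle\alpha\wedge\eta,\,\omega\wedge\eta\rangle_g\,\frac{\omega^n}{n!}$, and only then computes this inner product in coordinates as $\sum_I\sum_{i\notin I}\alpha_{i\bar i}|\eta_I|^2$. Your approach is more elementary in that it bypasses the Hodge star and the commutator lemma entirely, at the price of slightly more bookkeeping (the sign $(\sqrt{-1})^{p^2}dz^I\wedge d\bar z^I=\prod_{k\in I}e^k$ and the explicit action of $\Lambda^{p-1}$); the paper's approach is shorter once the Hodge identities are in hand and yields the intermediate interpretation of the left-hand side as a natural inner product, which has some conceptual value even if it is not used elsewhere.
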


\begin{proof}
	Let $x\in M$ and $\{z^1, z^2, \cdots, z^n\}$ be local holomorphic coordinates at $x$ such that $w=\sqrt{-1}\sum_i\limits dz^i\wedge d\bar{z}^i$,  $\alpha=\sqrt{-1}\sum_{i}\limits a_{i\bar{i}} dz^i\wedge d\bar{z}^i$ and $\eta=\sum_{I}\limits \eta_{I}dz^I$, where $I=(i_1<i_2<\cdots<i_p)$. 
	
	Then $\beta=\frac{1}{p}\sqrt{-1}\sum_{j,k,J,|J|=p-1}\limits \eta_{jJ}\overline{\eta_{kJ}}dz^j\wedge d\bar{z}^k$. By using Lemma \ref{lm:hodge} and properties of Hodge $\star$ operator, we have 
	\begin{align*}
		<\alpha\wedge \eta, \omega\wedge \eta>_g\frac{\omega^n}{n!}
		&= \alpha\wedge \eta\wedge \overline{\star L\eta}\\
		&= \alpha\wedge \eta\wedge \overline{\Lambda \star \eta}\\
		&= (\sqrt{-1})^{p^2} \alpha\wedge \eta\wedge \overline{\Lambda(\frac{\omega^{n-p}}{(n-p)!}\wedge \eta)} \\
		&= (\sqrt{-1})^{p^2} \frac{\alpha}{(n-p)!}\wedge \eta\wedge(\overline{\Lambda L^{n-p}\eta)}\\
		&= (\sqrt{-1})^{p^2}\alpha\wedge \eta\wedge \bar{\eta}\wedge \frac{\omega^{n-p-1}}{(n-p-1)!}. 
	\end{align*}
	
	On the other hand, 
	\begin{align*}
		<\alpha\wedge \eta, \omega\wedge \eta>_g
		&= \sum_{I}\sum_{i\notin I}\limits \alpha_{i\bar{i}}|\eta_I|^2\\
		&= \mbox{tr}_\omega \alpha\cdot |\eta|_g^2-p<\alpha, \beta>_g.
	\end{align*}
	Therefore, (\ref{eq:identity}) holds.
\end{proof}

\begin{remark}
	Our proof of Lemma \ref{lm:identity} utilizes the properties of Hodge operators, differing slightly from the proof given in \cite{ZZ}.
\end{remark}

\section{Proof of Theorem \ref{thm1} and Theorem \ref{thm2}}
\subsection{Proof of Theorem \ref{thm1}}
Let ~$(M^n, g)$~be a compact K\"ahler manifold of complex dimension $n\ge 2$.
We will adapt the similar arguments from \cite{Tang} and prove by contradiction. Suppose $h^{2,0}=\mbox{dim}H^{2, 0}_{\bar{\partial}}(M, \mathbb{C})\neq 0$ and let $\xi$ be a nonzero holomorphic $(2,0)$-form on $M$. Let $m$ be the largest integer such that $\eta=\xi^m\neq 0$ but $\xi^{m+1}\equiv 0$. Then at any point $x\in M$ where $\eta(x)\neq 0$, there is a unitary holomorphic frame $\{e_i\}_{i=1}^n$ such that $\omega=\sqrt{-1}\sum_{i=1}^n\limits e^i\wedge \bar{e}^i$ and 
$$
\xi=\lambda_1e^{1}\wedge e^{2}+\cdots+\lambda_m e^{2m-1}\wedge e^{2m}. 
$$ 
Hence,
$$
\eta=m!\lambda_1\cdots\lambda_m e^{1}\wedge \cdots\wedge e^{2m}, \quad \beta=\frac{|\eta|^2}{2m}\sqrt{-1}(e^1\wedge \bar{e}^1+\cdots+e^{2m}\wedge \bar{e}^{2m}).
$$
By the Bochner formula in \cite{NZ-GT}, we have that at $x$, for any $v\in T_{x}'M$,
\begin{equation}\label{eq:bochner}
	\left<\sqrt{-1}\partial\bar{\partial}|\eta|_g^2, \frac{1}{\sqrt{-1}}v\wedge \bar{v}\right>
	=|\nabla_v \eta|_g^2+|\eta|_g^2\sum_{i=1}^{2m}R_{i\bar{i}v\bar{v}}.
\end{equation}
Using (\ref{eq:bochner}), it is straightforward to obtain  
\begin{equation}\label{eq:laplace}
	\Delta_g|\eta|_g^2=\sum_{i=1}^n\limits|\nabla_i\eta|_g^2+|\eta|_g^2\sum_{i=1}^{2m}R_{i\bar{i}}
\end{equation}
and 
\begin{equation}\label{eq:partial}
	<\sqrt{-1}\partial\bar{\partial}|\eta|_g^2, \beta>_g=\frac{|\eta|_g^2}{2m}\sum_{i=1}^{2m}\limits |\nabla_i\eta|_g^2+\frac{|\eta|_g^4}{2m}\sum_{i,j=1}^{2m}R_{i\bar{i}j\bar{j}}.
\end{equation}
As in \cite{Tang} (one can also use identity (\ref{eq:identity})), we have 
\begin{equation}\label{eq:key}
	-\frac{1}{2}\int_M |d|\eta|_g^2|_g^2\cdot\omega^n=\int_{M}\Delta_g|\eta|^2_g\cdot|\eta|_g^2\cdot \omega^n=2m\int_M\langle \sqrt{-1}\partial\bar{\partial}|\eta|_g^2, \beta\rangle_g\cdot\omega^n.
\end{equation}
Now we will divide the proof into three parts. 

	\textbf{ 	(1)  $S_2^\perp$ or $S_2^+$ is quasi-positive.}


If $S_2^\perp$ is quasi-positive, then $S_l^\perp$ is quasi-positive for $2\le l\le n$. 
Let $\Sigma=\mbox{span}\{e^j\}_{j=1}^l$, then (\ref{eq:Sperp}) implies that
$$
S_l^\perp(x.\Sigma)=\sum_{j=1}^l\limits R_{j\bar{j}}-\frac{2}{l+1}S_l(x,\Sigma)
=\sum_{j=1}^l\limits R_{j\bar{j}}-\frac{2}{l+1}\sum_{i,j=1}^l\limits  R_{i\bar{i}j\bar{j}}.
$$
When $l=2m$, together with (\ref{eq:laplace}) and (\ref{eq:partial}), we have 
\begin{eqnarray}
	&\,&	\frac{\Delta_g|\eta|^2_g\cdot|\eta|_g^2}{2m}-
	\frac{2}{2m+1}<\sqrt{-1}\partial\bar{\partial} |\eta|^2_g, \beta>_g\nonumber\\
	&=&\frac{(2m-1)|\eta|_g^2}{2m(2m+1)}\sum_{i=1}^{2m}|\nabla_i\eta|_g^2+\frac{|\eta|_g^2}{2m}\sum_{i=2m+1}^n|\nabla_i\eta|_g^2+\frac{|\eta|_g^4}{2m}S_{2m}^\perp(x, \Sigma)\label{eq:S1}\\
	&\ge & \frac{|\eta|^4}{2m}S_{2m}^\perp(x).\nonumber
\end{eqnarray}
Hence,
$$
0\ge 
-\frac{2m-1}{4m(2m+1)}\int_M |d|\eta|_g^2|_g^2\omega^n\ge 
\int_M \frac{|\eta|_g^4}{2m}S_{2m}^\perp(x)\omega^n\ge 0.
$$
Therefore, $|\eta|\equiv C$ for some constant $C$. Since $S_{2m}^\perp$ is quasi-positive, then 
$$
0\ge \frac{C^4}{2m}\int_M S_{2m}^\perp(x)\omega^n>0,
$$ 
which is a contradiction if $C\neq 0$. Hence, there is no nonzero holomorphic $(2, 0)$-form. That is, $h^{2, 0}=0$ and $M$ is projective.

When $S_2^{+}$ is quasi-positive, we can also conclude that $S_l^+$ is quasi-positive for $2\le l\le n$. Then we can follow almost  the same proof as that for $S_2^\perp$, with only minor modifications. One simply needs to observe the difference between the two curvatures and replaces "$-$" to "$+$" on the left hand side of the equality (\ref{eq:S1}).

	\textbf{ (2) $\mbox{Ric}_3^\perp$ or $\mbox{Ric}_3^+$ is quasi-positive.}    

Then $S_3$ is quasi-positive and $S_l$ is quasi-positive for any $l\ge 3$.
We first deal with the case that $\mbox{Ric}_3^\perp$ is quasi-positive. With the same arguments in \cite{Tang}, one can easily get the contradiction when $m\ge 2$. We complete the proof as follows for convenience of the readers. If $ m\ge 2$, by (\ref{eq:partial}), we have 
\begin{equation}\label{eq:Ric6}
	<\sqrt{-1}\partial\bar{\partial}|\eta|_g^2, \beta>_g \ge \frac{|\eta|^2}{2m}\sum_{i=1}^{2m}\limits |\nabla_i\eta|^2+\frac{|\eta|^4}{2m}S_{2m}(x).
\end{equation}
Then 
\begin{align*}
	-\frac{1}{2}\int_M |d|\eta|_g^2|_g^2\cdot\omega^n=\int_{M}\Delta_g|\eta|^2_g\cdot|\eta|_g^2\cdot \omega^n &= 2m\int_M\langle \sqrt{-1}\partial\bar{\partial}|\eta|_g^2, \beta\rangle_g\cdot\omega^n\\
	&\ge  \int_M |\eta|_g^4S_{2m}(x)\omega^n\ge 0.
\end{align*}
Hence, $|\eta|\equiv C$ for some constant $C$. Since $S_{2m}$ is quasi-positive, then $$
0\ge C^4\int_M S_{2m}(x)\omega^n>0,
$$ 
which is a contradiction if $C\neq 0$. 

We only have to deal with the case: $m=1$, which is a little complicated. Since $\mbox{Ric}_3^\perp$ is quasi-positive, we assume that for any $x\in M$, $\mbox{Ric}_3^\perp(x, \tilde{\Sigma})(v, \bar{v})\ge \varphi(x)|v|^2,$ for any $3$-dimensional subspace $\tilde{\Sigma}\subset T_x'M$ and any $v\in \tilde\Sigma$. Here $\varphi(x)$ is a quasi-positive function on $M$. 

Let $u^1=\frac{e^1+e^2}{\sqrt{2}}, u^2=\frac{e^1-e^{2}}{\sqrt{2}}, u^l=e^l$ for $l\ge 3$. Then $\{u^j\}_{j=1}^n$ is another unitary orthogonal frame of $T_x'M$. Let $\Sigma_l=\{u^1, u^2, u^l\}$ for any $l\ge 3$, we get
\begin{equation*}
	R(u^1,\bar{u}^1,u^2,\bar{u}^2)+R(u^1,\bar{u}^1,u^l,\bar{u}^l)\ge \varphi(x).
\end{equation*}
After direct calculations, we get 
\begin{equation}\label{eq:Ric1}
	R_{1\bar{1}1\bar{1}}+R_{2\bar{2}2\bar{2}}-R_{1\bar{2}1\bar{2}}-R_{2\bar{1}2\bar{1}}+2(R_{1\bar{1}l\bar{l}}+R_{2\bar{2}l\bar{l}}+R_{1\bar{2}l\bar{l}}+R_{2\bar{1}l\bar{l}})\ge 4\varphi(x).
\end{equation}
We also have
$$
R(u^2,\bar{u}^2,u^1,\bar{u}^1)+R(u^2,\bar{u}^2,u^l,\bar{u}^l)\ge \varphi(x),
$$
and 
\begin{equation}\label{eq:Ric2}
	R_{1\bar{1}1\bar{1}}+R_{2\bar{2}2\bar{2}}-R_{1\bar{2}1\bar{2}}-R_{2\bar{1}2\bar{1}}+2(R_{1\bar{1}l\bar{l}}+R_{2\bar{2}l\bar{l}}-R_{1\bar{2}l\bar{l}}-R_{2\bar{1}l\bar{l}})\ge 4\varphi(x).
\end{equation}
By summing (\ref{eq:Ric1}) and (\ref{eq:Ric2}) together, we get 
\begin{equation}\label{eq:Ric3}
	R_{1\bar{1}1\bar{1}}+R_{2\bar{2}2\bar{2}}-R_{1\bar{2}1\bar{2}}-R_{2\bar{1}2\bar{1}}+2(R_{1\bar{1}l\bar{l}}+R_{2\bar{2}l\bar{l}})\ge 4\varphi(x).
\end{equation}
If we consider another unitary frame $u^1=\frac{e^1+i e^2}{\sqrt{2}}, u^2=\frac{e^1-i e^2}{\sqrt{2}}, u^l=e^l, l\ge 3$, similarly, we can get 
\begin{equation}\label{eq:Ric4}
	R_{1\bar{1}1\bar{1}}+R_{2\bar{2}2\bar{2}}+R_{1\bar{2}1\bar{2}}+R_{2\bar{1}2\bar{1}}+2(R_{1\bar{1}l\bar{l}}+R_{2\bar{2}l\bar{l}})\ge 4\varphi(x).
\end{equation}
(\ref{eq:Ric3}) and (\ref{eq:Ric4}) imply that
$$
R_{1\bar{1}1\bar{1}}+R_{2\bar{2}2\bar{2}}+2(R_{1\bar{1}l\bar{l}}+R_{2\bar{2}l\bar{l}})\ge 4\varphi(x).
$$
Together with $R_{1\bar{1}2\bar{2}}+R_{1\bar{1}l\bar{l}}\ge \varphi(x)$ and $R_{2\bar{2}1\bar{1}}+R_{2\bar{2}l\bar{l}}\ge \varphi(x)$,  we have 
$$
\sum_{i,j=1}^2R_{i\bar{i}j\bar{j}}+3(R_{1\bar{1}l\bar{l}}+R_{2\bar{2}l\bar{l}})\ge 6\varphi(x).
$$
Then
\begin{align*}
	&\,\sum_{l=3}^n\limits \left(\sum_{i,j=1}^2R_{i\bar{i}j\bar{j}}+3(R_{1\bar{1}l\bar{l}}+R_{2\bar{2}l\bar{l}})\right)\\
	&=(n-5)\sum_{i,j=1}^2R_{i\bar{i}j\bar{j}}+3(R_{1\bar{1}}+R_{2\bar{2}})\ge 6(n-2)\varphi(x).
\end{align*}
Now we consider 
\begin{align}
	&\,	3\cdot\frac{\Delta_g|\eta|^2_g\cdot|\eta|_g^2}{2}+(n-5)
	<\sqrt{-1}\partial\bar{\partial} |\eta|^2_g, \beta>_g\nonumber\\
	&= \frac{(n-2)|\eta|_g^2}{2}\sum_{i=1}^2|\nabla_i\eta|_g^2+\frac{3|\eta|_g^2}{2}\sum_{i=3}^n|\nabla_i\eta|_g^2\label{eq:Ric5}\\
	&\,+\frac{|\eta|_g^2}{2}\left((n-5)\sum_{i,j=1}^2R_{i\bar{i}j\bar{j}}+3(R_{1\bar{1}}+R_{2\bar{2}})\right)\nonumber\\
	&\ge 3(n-2)|\eta|_g^2\varphi(x).\nonumber
\end{align}
After the same arguments as in the proof of (1), one can also get the contradiction. 
Then $h^{2,0}(M)=0$ and $M$ is projective. 

One can follow the proof for $\mbox{Ric}_3^\perp$ in a similar way when  $\mbox{Ric}_3^+$ is quasi-positive and obtain the desired result.

\textbf{ (3) $\mbox{Ric}_p(x)$ is 2-quasi-positive.}

When $p=2$,  $2$-quasi-positive $\mbox{Ric}_2$ is equivalent to the quasi-positive $S_2$. Then $h^{2,0}=0$ and $M$ is projective proved in \cite{Tang}. 

When $p=n=3$, then $m=1$ and $2$-quasi-positive $\mbox{Ric}_3$ is equivalent to the $2$-quasi-positive Ricci curvature. Let $\lambda_1(x), \lambda_2(x), \lambda _3(x)$ be eigenvalues of Ricci curvature at $x$ and $\lambda_1\le \lambda_2\le\lambda_3$. We assume that $\psi(x)=\lambda_1+\lambda_2$ and  $\psi(x)$ is quasi-positive. 
Now (\ref{eq:laplace}) implies that
$$
\Delta_g|\eta|^2_g=\sum_{i=1}^3|\nabla_i\eta|^2+(R_{1\bar{1}}+R_{2\bar{2}})|\eta|^2.
$$
Here $R_{1\bar{1}}+R_{2\bar{2}}\ge \psi(x)$. With the similar arguments as above, one can also prove the result.

In the following, we assume that $n>3$ and $3\le p\le n$. Since $\mbox{Ric}_p$ is $2$-quasi-positive, then $S_{l}$ is quasi-positive when $l\ge 3$. If $m\ge 2$, one can go through the proof similarly to get the contradiction as in \cite{Tang} (one can also refer the proof in part (2)). We only have to deal with the case $m=1$,  that is, $\eta$ is a nonzero $(2, 0)$-form and $\eta\wedge \eta=0$. Let $\Sigma=\{e^1, e^2, e^{i_1},\cdots, e^{i_{p-2}}\}$ for any $3\le i_1<i_2<\cdots <i_{p-2}\le n$, we have 
$$
\mbox{Ric}_p(x, \Sigma)(e^1, \bar{e}^1)+\mbox{Ric}_p(x, \Sigma)(e^2, \bar{e}^2)
$$
is at least quasi-positive, that is,
$$
\sum_{i,j=1}^2 R_{i\bar{i}j\bar{j}}+\sum_{l=1}^{p-2}(R_{1\bar{1}i_l\bar{i_l}}+R_{2\bar{2}i_l\bar{i_l}})
$$
is at least quasi-positive. 

Through direct calculation, we have
\begin{align}
	&\,	C_{n-3}^{p-3}\frac{\Delta_g|\eta|^2_g\cdot|\eta|_g^2}{2}+C_{n-3}^{p-2}
	<\sqrt{-1}\partial\bar{\partial} |\eta|^2_g, \beta>\nonumber\\
	&= C_{n-3}^{p-3}\frac{|\eta|^2}{2}\sum_{i=1}^n|\nabla_i\eta|^2+C_{n-3}^{p-2}\frac{|\eta|^2}{2}\sum_{i=1}^2|\nabla_i\eta|^2\nonumber\\
	&\,+\frac{|\eta|^4}{2}\left(C_{n-3}^{p-3}(\mbox{Ric}_{1\bar{1}}+\mbox{Ric}_{2\bar{2}})+C_{n-3}^{p-2}\sum_{i,j=1}^2\limits R_{i\bar{i}j\bar{j}}\right)\label{eq:Ricperp}\\
	&= C_{n-3}^{p-3}\frac{|\eta|^2}{2}\sum_{i=1}^n|\nabla_i\eta|^2+C_{n-3}^{p-2}\frac{|\eta|^2}{2}\sum_{i=1}^2|\nabla_i\eta|^2\nonumber\\
	&\,+\frac{|\eta|^4}{2} \sum_{|I|=p-2}\left(\sum_{i,j=1}^2R_{i\bar{i}j\bar{j}}+\sum_{l=1}^{p-2}(R_{1\bar{1}i_l\bar{i_l}}+R_{2\bar{2}i_l\bar{i_l}})\right),\nonumber
\end{align}
where $I=(i_1, i_2,\cdots, i_{p-2}), i_l\in\{3,4,\cdots, n\}$ and $i_1<i_2<\cdots<i_{p-2}$. Here the curvature term in the last equality is at least quasi-positive by the assumption of the curvature condition. Then after integration over $M$, one can argue similarly as above to get the contradiction. Therefore, we have $h^{2, 0}=0$ and $M$ is projective. Theorem \ref{thm1} follows.

\subsection{Proof of Theorem \ref{thm2}} We can take the similar arguments in \cite{Nihol} to prove Theorem \ref{thm2}.

With the same assumptions as in the proof of Theorem \ref{thm1},

\textbf{ $(a)$} if $S_2^\perp\ge 0$, from (\ref{eq:key}) and (\ref{eq:S1}), we can get that
\begin{align*}
	0&\ge-\frac{2m-1}{4m(2m+1)}\int_M |d|\eta|_g^2|_g^2\omega^n\\
	&\ge \frac{(2m-1)}{2m(2m+1)}\int_M  |\eta|_g^2|\nabla \eta|_g^2\omega^n+
	\int_M \frac{|\eta|_g^4}{2m}S_{2m}^\perp(x)\omega^n.
\end{align*}
Then $|\eta|\equiv C$ where $C$ is a constant, and $\nabla\eta\equiv 0$, that is, $\eta$ is parallel. Since $\mbox{Hol}(M, g)=U(n)$, then from the representation theory, we have $\eta\equiv 0$. Therefore, $h^{2, 0}=0$ and $M$ is projective. 

The proof for $S_2^+\ge 0$ is essentially the same as that for $S_2^\perp\ge 0$.

\textbf{$(b)$} if $\mbox{Ric}_3^\perp(x)\ge 0$, then $S_{l}(x)$ is non-negative for $l\ge 3$. When $m=1$, from (\ref{eq:key}) and (\ref{eq:Ric5}), it is easy to get that $|\eta|\equiv C$ for some constant $C$ and $\nabla\eta\equiv 0$ as in the proof of $(a)$. 

When $m\ge 2$, $S_{2m}(x)$ is non-negative. Then from  (\ref{eq:partial}) and (\ref{eq:key}), we can similarly get $|\eta|\equiv C$ for some constant $C$, $\nabla_i\eta=0$ for $1\le i\le 2m$ and $\sum_{i,j=1}^{2m} R_{i\bar{i}j\bar{j}}=0$. In the following, we follow the arguments in \cite{Nihol}. Let $\Sigma=\mbox{span}\{e_i\}_{i=1}^{2m}$. Then $S_{2m}(x,\Sigma)$ attains its minimum among all $2m$-dimensional subspaces in $T_x'M$. Then by the second variation argument (cf. Proposition 4.2 of \cite{Nicrell}), we have that
$$
\sum_{i=1}^{2m}R_{i\bar{i}l\bar{l}}\ge 0\qquad \mbox{for}\quad l\ge 2m+1. 
$$ 
Let $v=e_l$ in (\ref{eq:bochner}), we can  get $\nabla_l \eta=0$ for $l\ge 2m+1$. Hence, we have $\nabla\eta=0$. Then we also have $\eta\equiv 0$ as in $(a)$, which implies that  $h^{2, 0}=0$ and $M$ is projective.

If $\operatorname{Ric}_3^{+}(x) \geq 0$, one can follow the similar arguments presented above to prove the desired result.

\textbf{$(c)$} if $\mbox{Ric}_p$ is 2-non-negative, we will divide into two cases. 

When $m=1$, by (\ref{eq:key}) and (\ref{eq:Ricperp}), it is easy to get that  $|\eta|\equiv C$ for some constant and $\nabla \eta=0$. 

When $m\ge 2$. Since $S_l(x)$ is non-negative when $l\ge 3$, by the proof of (3) in Theorem \ref{thm1} and the argument in the proof of $(b)$ above, we can still get $|\eta|\equiv C$ and $\nabla\eta=0$.  In a word,  $\eta$ is a parallel $(2m, 0)$-form. Then $\eta\equiv 0$ which implies that  $h^{2, 0}=0$ and $M$ is projective.  Theorem \ref{thm2} follows.

\noindent\textbf{ Acknowledgements} The authors would like to thank Professor Lei Ni to put up the questions for us and helpful discussions. The second author is grateful to  Professor Lei Ni and Professor Qingzhong Li for constant encouragement and support. 

\bibliographystyle{amsplain}

\end{document}